\newcommand{\bits}[1]{\{0,1\}^{#1}}
\newcommand{\Z}{\mathbb{Z}}
\newcommand{\N}{\mathbb{N}}
\newcommand{\Q}{\mathbb{Q}}
\newcommand{\C}{\mathbb{C}}
\newcommand{\NP}{{\rm NP }}
\newcommand{\FROB}{{\rm FROB }}
\newcommand{\s}[1]{\s_{#1}}
\newcommand{\monus}{\;\raise.5ex\hbox{{${\buildrel
   \ldotp\over{\hbox to 6pt{\hrulefill}}}$}}\;}
\newcounter{savenumi}
\newtheorem{theoremfoo}{Theorem}[section] 
\newenvironment{theorem}{\pagebreak[1]\begin{theoremfoo}}{\end{theoremfoo}}
\newtheorem{lemmafoo}[theoremfoo]{Lemma}
\newenvironment{lemma}{\pagebreak[1]\begin{lemmafoo}}{\end{lemmafoo}}
\newtheorem{conjecturefoo}[theoremfoo]{Conjecture}
\newtheorem{conventionfoo}[theoremfoo]{Convention}
\newtheorem{porismfoo}[theoremfoo]{Porism}
\newtheorem{gamefoo}[theoremfoo]{Game}
\newtheorem{corollaryfoo}[theoremfoo]{Corollary}
\newtheorem{openfoo}[theoremfoo]{Open Problem}
\newtheorem{exercisefoo}{Exercise}
\newcommand{\fig}[1] 
{
\begin{figure}
\begin{center}
\input{#1}
\end{center}
\end{figure}
}
\newtheorem{potanafoo}[theoremfoo]{Potential Analogue}
\newtheorem{notefoo}[theoremfoo]{Note}
\newtheorem{notabenefoo}[theoremfoo]{Nota Bene}
\newtheorem{nttn}[theoremfoo]{Notation}
\newenvironment{notation}{\pagebreak[1]\begin{nttn}\rm}{\end{nttn}}
\newtheorem{empttn}[theoremfoo]{Empirical Note}
\newtheorem{examfoo}[theoremfoo]{Example}
\newtheorem{dfntn}[theoremfoo]{Def}
\newenvironment{definition}{\pagebreak[1]\begin{dfntn}\rm}{\end{dfntn}}
\newtheorem{propositionfoo}[theoremfoo]{Proposition}
\newenvironment{proof}
    {\pagebreak[1]{\narrower\noindent {\bf Proof:\quad\nopagebreak}}}{\QED}
\newcommand{\yyskip}{\penalty-50\vskip 5pt plus 3pt minus 2pt}
\newcommand{\blackslug}{\hbox{\hskip 1pt
       \vrule width 4pt height 8pt depth 1.5pt\hskip 1pt}}
\newcommand{\QED}{{\penalty10000\parindent 0pt\penalty10000
       \hskip 8 pt\nolinebreak\blackslug\hfill\lower 8.5pt\null}
       \par\yyskip\pagebreak[1]}
\newcommand{\BBB}{{\penalty10000\parindent 0pt\penalty10000
       \hskip 8 pt\nolinebreak\hbox{\ }\hfill\lower 8.5pt\null}
       \par\yyskip\pagebreak[1]}
\newtheorem{factfoo}[theoremfoo]{Fact}
\title{Hilbert's Tenth Problem: Refinements and Variants}
\author{William Gasarch\thanks{The University of Maryland at College Park, gasarch@umd.edu}}
\date{}
\begin{document}
\maketitle

\begin{abstract}
Hilbert's 10th problem, stated in modern terms, is

{\it Find an algorithm that will, given $p\in \Z[x_1,\ldots,x_n]$,
determine if there exists $a_1,\ldots,a_n \in \Z$ such that $p(a_1,\ldots,a_n)=0$.}

Davis, Putnam, Robinson, and Matijasevi\v{c}
showed that there is no such algorithm. 
We look at what happens (1) for fixed degree and number of variables,
(2) for particular equations, and (3) for variants which reduce
the number of variables needed for undecidability results.
\end{abstract}

\section{This Article's Origin}

This article is a long version of an article based on a blog.
What? I give the complete history. 

\begin{enumerate}
\item 
On May 4, 2020 I wrote a blog about Hilbert's 10th problem: 
This blog caught the attention of Thomas Erlebach who invited me to write a full article
for {\it The Bulletin of the European Association for Theoretical Computer Science (BEATCS)}
on this topic. 

\item 
The article: 
{\it Hilbert's Tenth Problem for Fixed $d$ and $n$} appeared in the 
{\it Bulletin of the European Association for Theoretical Computer Science (BEATCS)},
Vol 133, February 2021.

\item 
After it appeared I made a few updates to my copy of the article, added a few whole
new sections. This article is the result.

\end{enumerate}

\section{Hilbert's Tenth Problem}\label{se:h10}

In 1900 Hilbert proposed 23 problems for mathematicians to work on over
the next 100 years (or longer). The 10th problem, stated in modern terms, is

\begin{quote}
Find an algorithm that will, given $p\in \Z[x_1,\ldots,x_n]$,
determine if there exists $a_1,\ldots,a_n \in \Z$ such that $p(a_1,\ldots,a_n)=0$.
\end{quote}

Hilbert probably thought this would inspire much deep number theory, and it did
inspire some. But the work on this problem took a very different direction. 
Davis, Putnam, and Robinson~\cite{DavisPutnam} 
showed that determining if an exponential diophantine equation has a solution in 
$\Z$ is undecidable. Their proof coded Turing machines into such equations.
Matijasevi\v{c}~\cite{HilbertTenth} extended their work by showing how to 
replace the exponentials with polynomials. 
Hence the algorithm that Hilbert wanted is not possible. 
For a self contained proof from soup to nuts see Davis' exposition~\cite{Davis-1973}. 
For more about both the proof and the implications of the result see the book
of 
Matijasevi\v{c}~\cite{HilbertTenthbook}. 

The undecidability result 
raises the question of what happens for {\it particular}
numbers of variables $n$ and degree $d$.
I thought that 
surely there must be a grid on the web where the $d$-$n$-th entry is
\begin{itemize}
\item
D if the problem for degree $\le d$, and $\le n$ variables is Decidable.
\item
U if the problem for degree $\le d$, and $\le n$ variables is Undecidable.
\item
? if the status of the problem for degree $\le d$, and $\le n$ variables is unknown.
\end{itemize}

There is a graph in a paper written in German, by Bayer et al.~\cite{H10grid}, that has
the information I want, though it's hard to read and seems to only deal
with very large degrees. Putting that aside I ask
{\it Why has the quest for a grid not gotten more attention?}
Here are some speculations. 

\begin{enumerate}
\item
Logicians work on showing that determining if there is a solution in $\N$
is undecidable.
Number theorists worked on showing that determining if there is a solution in $\Z$ 
is decidable.
Since Logicians worked in $\N$ and Number Theorists in $\Z$,
a grid would need to reconcile these two related problems.

\item
There is a real dearth of positive results, so a grid would not be that interesting.

\item
The undecidable results often involve rather large values of $d$, so the grid would be
hard to draw.

\item
Timothy Chow offered this speculation in an email to me: 
{\it One reason there isn't already a website of the type you envision is
that from a number-theoretic (or decidability) point of view, parameterization by
degree and number of variables is not as natural as it might seem at first 
glance. The most fruitful lines of research have been geometric, and so
geometric concepts such as smoothness, dimension, and genus are more
natural than, say, degree. A nice survey by a number theorist is the book
\emph{Rational Points on Varieties} by Bjorn Poonen~\cite{Poonen-2017}. 
Much of it is highly technical; however, reading the preface is very 
enlightening. Roughly speaking, the current state of the art is that
there is really only one known way to prove that a system of
Diophantine equations has no rational solution.}
\end{enumerate}

Since the grid is hard to draw we do not present it in this paper. 
However, this paper does 
collect up all that is known and points to open problems.
None of the results are mine. 

\newcommand{\HtenZ}{{\rm H}\Z}
\newcommand{\HtenN}{{\rm H}\N}
\newcommand{\HtenQ}{{\rm H}\Q}
\newcommand{\D}{{\rm D}}
\newcommand{\U}{{\rm U}}

In Section~\ref{se:NZ} we will relate the problem of seeking solutions in 
$\Z$ with the problem of seeking solutions in $\N$. 
In Section~\ref{se:dn} we will look at what is known for fixed $d,n$
both for solutions over $\N$ and solutions in $\Z$. 
In Subsection~\ref{se:H10U} we will present values of $(d,n)$ where 
the problem is undecidable. 
In Subsection~\ref{se:H10D} we will present values of $(d,n)$ where 
the problem is decidable. 
In Subsection~\ref{se:32} we discuss why $\HtenZ(3,2)$ has almost been proved
decidable. 
In Section~\ref{se:part} we will discuss classes of polynomials with other
conditions added. 
In Section~\ref{se:disc} we will discuss the vast area between 
the decidable and undecidable cases. 
In Section~\ref{se:variants} we discuss variants of Hilbert's 10th problem
that lead to getting undecidability results with polynomials in
fewer variables.
In Section~\ref{se:disc2} we will briefly present 
Matijasevi\v{c}'s discussion of what Hilbert really wanted in contrast to
what happened. 

\section{Definitions and Reconciling $\N$ with $\Z$}\label{se:NZ}

\begin{notation}~
\begin{enumerate}
\item
$\HtenZ(d,n)$ is the problem where  the degree is $\le d$, the number of
variables is $\le n$, and we seek a solution in $\Z$.
\item
$\HtenN(d,n)$ is the problem where  the degree is $\le d$, the number of
variables is $\le n$, and we seek a solution in $\N$.
\item
$\HtenZ(d,n)=\D$ means that there is an algorithm to decide $\HtenZ(d,n)$.
\item
$\HtenZ(d,n)=\U$ means that there is no algorithm to decide $\HtenZ(d,n)$.
\item
Similarly for $\HtenN(d,n)$ equal to $\D$ or $\U$.
\end{enumerate}
\end{notation}

The four parts of the next lemma are usually stated with $x_1,x_2,x_3\in\N$ or
$x_1,x_2,x_3,x_4\in\N$, and not in the iff form we use. However, we need these
statements in the form we present them. 

\begin{lemma}\label{le:three}~
\begin{enumerate}
\item
$x\in\N$ and $x$ is not of the form $4^a(8b+7)$ (where $a,b\in\N$) iff  there exists
 $x_1,x_2,x_3 \in\Z$ such that
$$x=x_1^2+x_2^2+x_3^2.$$
\item
$x\in\N$ and $x\equiv 1 \pmod 4$ iff  there exists $x_1,x_2,x_3\in\Z$ such that 
(1) $x_1,x_2\equiv 0\pmod 2$, (2) $x_3\equiv 1 \pmod 2$, and
$$x=x_1^2+x_2^2+x_3^2.$$
\item 
$n\in\N$ iff there exists $x_1,x_2,x_3\in\Z$ such that $n=x_1^2+x_2^2+x_3^2+x_3$.
\end{enumerate}
\end{lemma}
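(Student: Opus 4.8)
The plan is to reduce part 3 to part 2, which already packages the three-square theorem in exactly the refined congruence form we need. The bridge between the two is the algebraic identity obtained by multiplying through by $4$ and completing the square in the variable $x_3$: since $4(x_3^2+x_3) = (2x_3+1)^2 - 1$, the equation $n = x_1^2+x_2^2+x_3^2+x_3$ is equivalent, after multiplying by $4$ and adding $1$, to
$$4n+1 = (2x_1)^2 + (2x_2)^2 + (2x_3+1)^2.$$
This exhibits $4n+1$ as a sum of two even squares and one odd square, which is precisely the shape governed by part 2.

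For the forward direction I would argue as follows. Suppose $n\in\N$. Then $4n+1$ is a positive integer with $4n+1\equiv 1 \pmod 4$, so part 2 supplies integers $y_1,y_2,y_3$ with $y_1,y_2$ even, $y_3$ odd, and $4n+1 = y_1^2+y_2^2+y_3^2$. Writing $y_1=2x_1$, $y_2=2x_2$, and $y_3=2x_3+1$ (legitimate exactly because of the stated parities), and expanding, the $+1$ on each side cancels and a factor of $4$ divides out, leaving $n=x_1^2+x_2^2+x_3^2+x_3$ with $x_1,x_2,x_3\in\Z$.

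For the backward direction, suppose integers $x_1,x_2,x_3$ satisfy $n=x_1^2+x_2^2+x_3^2+x_3$. The expression is visibly an integer, so it remains only to check $n\ge 0$. Here I would note that $x_3^2+x_3 = x_3(x_3+1)$ is a product of two consecutive integers and hence nonnegative for every $x_3\in\Z$ (both factors share the same sign or one is zero), while $x_1^2,x_2^2\ge 0$; thus $n\ge 0$ and $n\in\N$.

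The step I expect to carry the real content is invoking part 2 in the forward direction: everything hinges on that lemma already delivering a representation of $4n+1$ in which two of the three squares are even and the third is odd, so that the inverse substitution $y_3=2x_3+1$ can be performed over $\Z$. The completing-the-square identity and the consecutive-integers observation are routine; the only thing to keep an eye on is the convention that $\N$ includes $0$, which is what makes the two directions match up (both $n=0$ and the degenerate representations $x_3\in\{0,-1\}$ are allowed).
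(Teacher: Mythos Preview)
Your argument for part 3 is correct and follows exactly the route the paper takes: apply part 2 to $4n+1$, write the resulting two even and one odd square as $(2x_1)^2+(2x_2)^2+(2x_3+1)^2$, expand, and divide by $4$. Your treatment is in fact slightly more complete than the paper's, since you also supply the easy backward direction via $x_3(x_3+1)\ge 0$, which the paper leaves implicit.
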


\begin{proof}~



\noindent
1) This is Legendre's three-square theorem. It is sometimes called the
Gauss-Legendre Theorem.

\smallskip

\noindent
2) Since $x\equiv 1\pmod 4$, $x$ satisfies the hypothesis of Part 1. Hence
there exists $x_1,x_2,x_3$ such that

$$x=x_1^2+x_2^2+x_3^2.$$

Take this equation mod 4.

$$1\equiv x_1^2 + x_2^2 + x_3^2 \pmod 4.$$

It is easy to see that the only parities of $x_1,x_2,x_3$ that work
are for two of them to be even and one of them to be odd.

\smallskip

\noindent
3) Let $n\in\N$. Note that $4n+1$ satisfies the premise of Part 2. 
By Part 2 there exists $x_1,x_2,x_3\in\Z$ such that 

$$4n+1 = (2x_1)^2 + (2x_2)^2 + (2x_3+1)^2$$

$$4n+1 = 4x_1^2 + 4x_2^2 + 4x_3^2 + 4x_3 + 1$$

$$n = x_1^2 + x_2^2 + x_3^2+ x_3.$$
\end{proof}

\begin{samepage}

\begin{theorem}\label{th:HtenZD}~
\begin{enumerate}
\item
If $\HtenZ(2d,3n)=\D$, then $\HtenN(d,n)=\D$.
\item
If $\HtenN(d,n)=\U$, then $\HtenZ(2d,3n)=\U$.
This is the contrapositive of Part 1.
\item
If $\HtenZ(f(d,n),2n+2)=\D$, then $\HtenN(d,n)=\D$
where
$$f(d,n)=\max\{2d,(2n+3)2^n\}.$$
\item
If $\HtenN(d,n)=\U$, then $\HtenZ(f(d,n),2n+2)=\U$.
This is the contrapositive of Part 3.
\end{enumerate}
\end{theorem}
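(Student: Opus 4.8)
The plan is to prove Parts 1 and 3 as many--one reductions from $\HtenN$ to $\HtenZ$; Parts 2 and 4 then follow at once, since they are literally the contrapositives (a reduction showing ``decidable target $\Rightarrow$ decidable source'' equally shows ``undecidable source $\Rightarrow$ undecidable target'').

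For Part 1, given $p(x_1,\ldots,x_n)\in\Z[x_1,\ldots,x_n]$ of degree $\le d$ for which we seek a root in $\N^n$, I would substitute each variable by the three-square-plus-$w$ form supplied by Part 3 of Lemma~\ref{le:three}, forming
$$q(\vec y,\vec z,\vec w)=p\bigl(y_1^2+z_1^2+w_1^2+w_1,\;\ldots,\;y_n^2+z_n^2+w_n^2+w_n\bigr).$$
The equivalence ``$q$ has a root in $\Z^{3n}$ iff $p$ has a root in $\N^n$'' runs in two directions. If $p(\vec a)=0$ with $\vec a\in\N^n$, then Part 3 of Lemma~\ref{le:three} writes each $a_i$ as $y_i^2+z_i^2+w_i^2+w_i$, giving an integer root of $q$. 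Conversely each block $y_i^2+z_i^2+w_i^2+w_i$ is $\ge 0$ for integer inputs (because $w_i^2+w_i=w_i(w_i+1)\ge 0$), so any integer root of $q$ evaluates $p$ at a point of $\N^n$. Since each substitution has degree $2$, $q$ has degree $\le 2d$ in $3n$ variables, so a decision procedure for $\HtenZ(2d,3n)$ decides $\HtenN(d,n)$. The only things to check are the degree and variable bookkeeping, both immediate.

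Part 3 is the substantive one: the goal is the same reduction but using only $2n+2$ integer unknowns. Here I would keep $x_1,\ldots,x_n$ as integer unknowns and adjoin a \emph{non-negativity gadget}: a polynomial $G(x_1,\ldots,x_n,t_1,\ldots,t_{n+2})$ in $n+2$ fresh unknowns with the property that, for fixed integers $x_1,\ldots,x_n$, the equation $G=0$ has an integer solution in the $t_j$ iff every $x_i\ge 0$. Granting such a $G$ of degree $(2n+3)2^{n-1}$, I would output the single polynomial
$$P=p(x_1,\ldots,x_n)^2+G(x_1,\ldots,x_n,t_1,\ldots,t_{n+2})^2,$$
which has $2n+2$ unknowns and degree $\max\{2d,(2n+3)2^n\}=f(d,n)$, and whose integer zeros are exactly the integer points with $p=0$ and all $x_i\ge 0$, i.e.\ the $\N$-roots of $p$. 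A decision procedure for $\HtenZ(f(d,n),2n+2)$ then decides $\HtenN(d,n)$.

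Everything therefore reduces to building the gadget $G$, and this is where I expect the real difficulty and where the exponential degree is paid. Certifying each condition $x_i\ge 0$ separately by Lemma~\ref{le:three} costs three unknowns apiece, for $3n$ in all (this just reproduces Part 1); to reach $n+2$ one must amortize the witnesses across the variables. A clean reformulation that helps is: for integer $x_1,\ldots,x_n$, all $x_i\ge 0$ iff all elementary symmetric functions $e_1,\ldots,e_n$ are $\ge 0$ --- indeed, for $t<0$ every term of $\prod_i(t-x_i)=\sum_j(-1)^je_jt^{n-j}$ carries the common sign $(-1)^n$, so $\prod_i(t-x_i)$ has no negative root. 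This by itself does not lower the unknown count, so I would combine it with a recursive construction in which the witnesses for successive variables reuse two shared ``global'' unknowns and each step nests the previous polynomial inside a squaring, roughly doubling its degree; this nesting is what drives the degree to order $2^n$ while the unknown count stays linear. The crux --- and the step I expect to be hardest --- is to organize this recursion so that the unknowns total exactly $n+2$ and the degree is held to $(2n+3)2^{n-1}$, and to verify the forward direction, namely that every $\N$-root of $p$ really does lift to some integer choice of the $t_j$.
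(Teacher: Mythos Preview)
Your Part~1 is correct and is exactly the paper's argument: substitute each $x_i$ by the three-term form of Lemma~\ref{le:three}.3 and observe that the resulting polynomial has degree $\le 2d$ in $3n$ variables; your explicit verification of the converse direction via $w_i^2+w_i=w_i(w_i+1)\ge 0$ is a detail the paper leaves implicit. Parts~2 and~4 are handled identically.

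For Part~3 there is a mismatch in expectations rather than an error. The paper does not prove Part~3 at all: it simply attributes the result to Sun~\cite{zhi} and moves on. Your sketch therefore goes further than the paper does, and the high-level shape you propose (keep $x_1,\ldots,x_n$, combine $p^2+G^2$, build a non-negativity gadget $G$ in $n+2$ auxiliary unknowns via a recursive relation-combining construction that doubles degree at each step) is in fact the strategy Sun uses. That said, your sketch is not a proof: the elementary-symmetric-function remark is correct but, as you note, irrelevant to cutting the unknown count, and the actual construction of $G$ with exactly $n+2$ unknowns and degree $(2n+3)2^{n-1}$ is left as an aspiration. If you intend to supply a self-contained proof rather than a citation, the missing ingredient is Sun's relation-combining theorem, which packages the simultaneous conditions $x_1\ge 0,\ldots,x_n\ge 0$ into a single Diophantine condition with the stated parameters; absent that, the honest move is to do as the paper does and cite~\cite{zhi}.
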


\end{samepage}

\begin{proof}~

%
%
%
%
%
%
\noindent
1) Let $p\in \Z[x_1,\ldots,x_n]$. We want to know if there is a solution in $\N$.

Let $q$ be the polynomial of degree $2d$ with $3n$ variables that you get if you replace
each $x_i$ with
$x_{i1}^2+x_{i2}^2+x_{i3}^2+x_{i3}$
where
$x_{i1},x_{i2},x_{i3}$
are 3 new variables.
By Lemma~\ref{le:three}.3 we have:

$p$ has a solution in $\N$ iff $q$ has a solution in $\Z$.

Use that $\HtenZ(2d,3n)=\D$ to determine if $q$ has a solution. Hence $\HtenN(d,n)=\D$.

\bigskip

\noindent
3) This was proven by Sun~\cite{zhi}. 

\end{proof}

\begin{theorem}\label{th:HtenND}~
\begin{enumerate}
\item
If $\HtenN(d,n)=\D$ then $\HtenZ(d,n)=\D$.
\item
If $\HtenZ(d,n)=\U$ then $\HtenN(d,n)=\U$.
This is the contrapositive of part 1.
\end{enumerate}
\end{theorem}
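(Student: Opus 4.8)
The plan is to prove Part 1; Part 2 is immediate as its contrapositive. So assume $\HtenN(d,n)=\D$ via some decider $M$, and let $p\in\Z[x_1,\ldots,x_n]$ have degree $\le d$. I want to decide whether $p$ has a solution in $\Z$ using only finitely many calls to $M$.

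The key idea is that every integer is a sign times a natural number. For each sign pattern $\epsilon=(\epsilon_1,\ldots,\epsilon_n)\in\{-1,+1\}^n$ I define
$$q_\epsilon(x_1,\ldots,x_n)=p(\epsilon_1 x_1,\ldots,\epsilon_n x_n).$$
Substituting $\pm x_i$ for $x_i$ changes neither the degree nor the number of variables, so each $q_\epsilon$ is again an instance of $\HtenN(d,n)$. I would then establish the equivalence
$$p \text{ has a solution in } \Z \iff (\exists\, \epsilon\in\{-1,+1\}^n)\,[\,q_\epsilon \text{ has a solution in } \N\,].$$
The forward direction takes an integer solution $(a_1,\ldots,a_n)$, sets $\epsilon_i$ to the sign of $a_i$ (either choice when $a_i=0$) and $b_i=|a_i|\in\N$, so that $(b_1,\ldots,b_n)$ solves $q_\epsilon$; the backward direction takes an $\N$-solution $(b_1,\ldots,b_n)$ of some $q_\epsilon$ and observes that $(\epsilon_1 b_1,\ldots,\epsilon_n b_n)\in\Z^n$ solves $p$.

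With the equivalence in hand the algorithm is clear: enumerate all $2^n$ sign patterns $\epsilon$, run $M$ on each $q_\epsilon$, and answer yes exactly when at least one call returns yes. Since $n$ is fixed, $2^n$ is a constant, so this is a genuine algorithm, and it decides $\HtenZ(d,n)$.

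I do not expect a real obstacle here. The only point to check carefully is that the reduction preserves \emph{both} parameters $d$ and $n$, which it does because sign substitution is degree- and variable-preserving, and that the search over sign patterns stays finite because $n$ is fixed. This is precisely what makes the direction $\N\to\Z$ cheap, in contrast to Theorem~\ref{th:HtenZD}, where encoding a $\Z$-problem as an $\N$-problem via sums of squares forces a blow-up in both the degree and the number of variables.
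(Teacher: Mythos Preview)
Your proposal is correct and follows essentially the same approach as the paper: both reduce the $\Z$-question to $2^n$ many $\N$-questions by running over all sign patterns and substituting $\pm x_i$ for $x_i$, then appealing to the $\HtenN(d,n)$ decider. Your write-up is in fact more explicit than the paper's, since you spell out both directions of the equivalence and note that sign substitution preserves the degree and the number of variables.
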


\begin{proof}
Let $p\in \Z[x_1,\ldots,x_n]$. We want to know if there is a solution in $\Z$.
For each $\vec b = (b_1,\ldots,b_n) \in \bits n$ let
$q_{\vec b}(x_1,\ldots,x_n)$ be formed as follows: for every $i$ where $b_i=1$, replace $x_i$ with $-x_i$.
It is easy to see that

$p$ has a solution in $\Z$ iff

$(\exists \vec b)[q_{\vec b}\hbox{ has a solution in $\N$ }].$

The result follows.
\end{proof}

\section{What Happens for Fixed $d,n$?}\label{se:dn} 

\subsection{When is $\HtenN(d,n)=\U$? $\HtenZ(d,n)=\U$?}\label{se:H10U}

In 1980 Jones~\cite{Jones-1980} announced 16 pairs $(d,n)$ for which $\HtenN(d,n)=\U$. 
In 1982 Jones~\cite{Jones-1982} provided proofs for 13 of these pairs
(12 in Theorem 4 and 1 in Section 3). 
I emailed Jones about the other three and he emailed back the following:
\begin{itemize}
\item
Those with $d<2668$ have proofs similar to the $(4,58)$ case. 
This was carried out by Dr.\ Hideo Wada. (No reference is given.)
\item 
The pair with a very large value of $d$ can be obtained
using many relation-combining theorems, like the one at the end of
the 1982 paper, which allow one to define two squares with one unknown. 
\end{itemize}

In the theorem below we present all 16 statements from the Jones-1980 paper
along with a result by Sun~\cite{Sun-2020} from 2020. 
We note (1) which three do not have proofs in Jones-1982 (though based on
Jones's email we are sure the results are true), and (2) the result of Sun. 
We state the results of the form $\HtenN(d,n)=\U$ and then 
apply Theorem~\ref{th:HtenZD} to obtain results of the form $\HtenZ(d',n')=\U$
(except for Sun's result which is already about $\HtenZ$). 

The proofs involve very clever use of elementary number theory to 
get the degrees and number-of-variables reduced. 

In some of the results there are absurdly large numbers like 
$4.6\times 10^{44}$. These are probably upper bounds that might be able to be lowered
with a careful examination of the proofs. 
These large numbers only occur as $d$ since the main concern was to get the
number of variables down. 

\begin{theorem}\label{th:h10N}~
\begin{enumerate}
\item
$\HtenN(4,58)=\U$ hence
$\HtenZ(8,174)=\U$.

\item
$\HtenN(8,38)=\U$ hence
$\HtenZ(16,114)=\U$.

\item
$\HtenN(12,32)=\U$ hence
$\HtenZ(24,96)=\U$.

\item
$\HtenN(16,29)=\U$ hence
$\HtenZ(32,87)=\U$.
(Not proven in Jones-1982.)

\item
$\HtenN(20,28)=\U$ hence
$\HtenZ(40,84)=\U$.

\item
$\HtenN(24,26)=\U$ hence
$\HtenZ(48,78)=\U$.

\item
$\HtenN(28,25)=\U$ hence
$\HtenZ(56,75)=\U$.

\item
$\HtenN(36,24)=\U$ hence
$\HtenZ(72,72)=\U$.
(Not proven in Jones-1982.)

\item
$\HtenN(96,21)=\U$ hence
$\HtenZ(192,63)=\U$.

\item
$\HtenN(2668,19)=\U$ hence
$\HtenZ(5336,57)=\U$.

\item
$\HtenN(200000,14)=\U$ hence

$\HtenZ(400000,42)=\U$ and 
$\HtenZ(31 \times 2^{14},30)=\U$.

\item
$\HtenN(6.6\times 10^{43},13)=\U$ hence
$\HtenZ(13.2 \times 10^{43},28)=\U$.
(Not proven in Jones-1982.)

\item
$\HtenN(1.3\times 10^{44},12)=\U$ hence
$\HtenZ(2.6 \times 10^{44},36)=\U$.

\item
$\HtenN(4.6\times 10^{44},11)=\U$ hence
$\HtenZ(9.2 \times 10^{44},24)=\U$.

\item
$\HtenN(8.6\times 10^{44},10)=\U$ hence
$\HtenZ(17.2 \times 10^{44},22)=\U$.

\item
$\HtenN(1.6\times 10^{45},9)=\U$ hence
$\HtenZ(3.2 \times 10^{45},20)=\U$.
(Jones' 1982 paper presents the proof of this result and credits it to 
Matijasevi\v{c}.)

\item 
$\HtenZ(d,11)=\U$ for some $d$. The number $d$ is not stated. 
(This is due to Sun~\cite{Sun-2020}.)

\end{enumerate}
\end{theorem}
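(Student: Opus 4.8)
The plan is to reduce to the undecidability of $\HtenN(\cdot,9)$ recorded in the preceding item (Matijasevi\v{c}'s representation of every recursively enumerable set over $\N$ using $9$ unknowns) and then to sharpen the passage from $\N$ to $\Z$ so that it costs only two extra unknowns. Theorem~\ref{th:HtenZD} already supplies two such passages: the three-square substitution of Lemma~\ref{le:three}, which triples the number of unknowns, and Sun's earlier $2n+2$ bound. Applied to the $9$-unknown case these give only $\HtenZ(\cdot,27)$ and $\HtenZ(\cdot,20)$, neither of which reaches $11$. Since $11 = 9+2$, I would instead aim for a translation lemma of the form ``$\HtenN(d,n)=\U \Rightarrow \HtenZ(d',n+2)=\U$'' and instantiate it at $n=9$.

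First I would fix a polynomial $P(a,x_1,\dots,x_9)$ for which $P=0$ has a solution in $\N$ exactly when $a$ belongs to a fixed undecidable recursively enumerable set. The whole problem then reduces to re-expressing the nine constraints $x_1,\dots,x_9\ge 0$ as a single Diophantine condition over $\Z$ that introduces only two new integer unknowns. The device I would exploit is a relation-combining identity of the type mentioned at the end of Jones's $1982$ paper, which defines two perfect squares using a single unknown via a Pell-type equation; since any perfect square is automatically non-negative, forcing suitable shifts of the $x_i$ to be squares certifies $x_1,\dots,x_9\in\N$. The two extra unknowns carry the Pell-equation machinery and the bookkeeping that glues the nine square conditions together. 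One then clears denominators to obtain a single $q(a,\cdot)\in\Z[\,\cdot\,]$ in $11$ variables with $q=0$ solvable over $\Z$ iff $P=0$ is solvable over $\N$, whence $\HtenZ(\cdot,11)=\U$.

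The hard part is exactly this packing step: compressing nine independent non-negativity constraints into just two integer unknowns while keeping an \emph{exact} two-way equivalence. This is what forces the delicate Pell-equation and sum-of-squares identities, and it is also why the degree $d$ is driven up to an enormous unstated value --- the construction trades a large blow-up in degree for the saving in the number of unknowns, so the theorem can only assert that some $d$ works without naming it. I expect the most demanding point to be verifying that the combining identity admits no spurious integer solutions, i.e.\ that $q=0$ is solvable over $\Z$ only when $P=0$ is genuinely solvable over $\N$; producing the identity is routine number theory, but pinning down its exactness is where the real work lies.
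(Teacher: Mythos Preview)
Your proposal addresses only item~17; the theorem has seventeen parts, and the paper does not prove any of them itself. Items 1--16 are quoted from Jones~\cite{Jones-1980,Jones-1982} (with the three items flagged ``Not proven in Jones-1982'' handled as explained in the discussion preceding the theorem), and the accompanying $\HtenZ$ conclusions are obtained simply by applying Theorem~\ref{th:HtenZD} to each $\HtenN$ statement. Item~17 is cited to Sun~\cite{Sun-2020} with no further argument. So the paper's ``proof'' is: cite the primary sources and invoke Theorem~\ref{th:HtenZD} for the $\N\to\Z$ passage. You have not reproduced that structure; in particular you say nothing about items 1--16.

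For item~17 specifically, your outline is in the right spirit --- Sun does start from the nine-unknown representation over $\N$ and does rely on relation-combining machinery of the kind introduced in~\cite{Jones-1982} and refined in~\cite{zhi} --- but the central step you propose is not available in the form you state it. You aim for a \emph{general} translation lemma ``$\HtenN(d,n)=\U \Rightarrow \HtenZ(d',n+2)=\U$'' and then instantiate at $n=9$. No such uniform $n\mapsto n+2$ lemma is known; the best general passages are exactly the $(d,n)\mapsto(2d,3n)$ and $(d,n)\mapsto(f(d,n),2n+2)$ of Theorem~\ref{th:HtenZD}. Sun's argument is not a black-box translation applied to an arbitrary nine-unknown polynomial: it exploits the specific internal structure of the Matijasevi\v{c}--Jones universal system (which Pell relations appear, which variables are already forced to be squares, etc.) so that the non-negativity constraints can be absorbed with only two additional integer unknowns. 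Treating $P$ as an opaque nine-variable polynomial, as your plan does, gives you no leverage to compress nine independent sign conditions into two unknowns, and that is the genuine gap in the proposal.
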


\subsection{When is $\HtenZ(d,n)=\D$? $\HtenN(d,n)=\D$?}\label{se:H10D}

We will need a brief discussion of the following problem which is attributed to Frobenius.

\noindent
{\it Given a set of relatively prime positive integers $\vec a = (a_1,\ldots,a_n)$ find
the set 

$$\FROB(\vec a)=\biggl \{ \sum_{i=1}^n a_ix_i \colon x_1,\ldots,x_n\in\N \biggr \}.$$
}

\bigskip

It is known that $\FROB(\vec a)$ is always cofinite. 
We will need to look at the case where the $a_1,\ldots,a_n$ may have a
gcd of $d\ne 1$. In this case, $\FROB(\vec a$) is always a cofinite subset of $d\N$. 

The $n=2$ case was solved by James Joseph Sylvester in 1884:

\begin{lemma}\label{le:syl}
Let $a_1,a_2 \in\N$. Let $d=\gcd(a_1,a_2)$. 
There exists a finite set $F\subseteq d\N$ such that 

$$\FROB(a_1,a_2) = F \cup \{ dx \colon x\ge a_1a_2-a_1-a_2+1 \}$$

\noindent
and $(a_1a_2-a_1-a_2)d\notin \FROB(a_1,a_2)$. 
\end{lemma}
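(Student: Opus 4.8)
The plan is to reduce the statement to the classical two-variable case in which the generators are coprime. Write $a_1 = d b_1$ and $a_2 = d b_2$, so that $\gcd(b_1,b_2)=1$. Because $a_1 x_1 + a_2 x_2 = d(b_1 x_1 + b_2 x_2)$, we obtain the clean identity $\FROB(a_1,a_2) = d\cdot\FROB(b_1,b_2)$, and every membership question about $\FROB(a_1,a_2)$ becomes the corresponding question about $\FROB(b_1,b_2)$ after dividing by $d$. Thus the entire content reduces to the coprime Frobenius problem, which is Sylvester's original setting.

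For the coprime case I would establish two facts: (i) every integer $n \ge (b_1-1)(b_2-1) = b_1b_2 - b_1 - b_2 + 1$ lies in $\FROB(b_1,b_2)$, and (ii) $b_1b_2 - b_1 - b_2 \notin \FROB(b_1,b_2)$. For (i), since $\gcd(b_1,b_2)=1$ the $b_2$ numbers $n, n-b_1, \ldots, n-(b_2-1)b_1$ cover all residues modulo $b_2$, so exactly one of them, say $n - k b_1$ with $0 \le k \le b_2-1$, is divisible by $b_2$; the estimate $n - k b_1 \ge n - (b_2-1)b_1 \ge -(b_2-1) > -b_2$ together with divisibility by $b_2$ forces $n - k b_1 \ge 0$, giving the nonnegative representation $n = b_1 k + b_2\cdot\frac{n-kb_1}{b_2}$. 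For (ii), the residue of $b_1b_2-b_1-b_2$ modulo $b_2$ forces $k = b_2-1$, whence $n-kb_1 = -b_2 < 0$; since any representation would need $x_1 \equiv b_2-1 \pmod{b_2}$ and hence $b_1 x_1 \ge b_1(b_2-1) > n$, no nonnegative representation can exist.

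With (i) and (ii) in hand, the displayed equation follows readily. Every element of $\FROB(a_1,a_2)$ is a multiple $dx$ of $d$; I would let $F$ consist of those with $x \le a_1a_2-a_1-a_2$, a finite subset of $d\N$. Because the stated threshold is at least the true one $b_1b_2-b_1-b_2+1$ (a short inequality check gives $a_1a_2-a_1-a_2 \ge b_1b_2-b_1-b_2$ for all $d\ge 1$), fact (i) rescaled by $d$ guarantees that every $dx$ with $x \ge a_1a_2-a_1-a_2+1$ lies in $\FROB(a_1,a_2)$. Hence $\FROB(a_1,a_2)$ equals $F$ together with this tail, establishing the equation.

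The step I expect to be the main obstacle, and the one I would scrutinize most carefully, is matching the explicitly named gap $(a_1a_2-a_1-a_2)d$ to the rescaling. Fact (ii) says the largest gap of $\FROB(b_1,b_2)$ is $b_1b_2-b_1-b_2$, so after multiplying by $d$ the largest element of $d\N$ missing from $\FROB(a_1,a_2)$ is $d(b_1b_2-b_1-b_2) = \tfrac{a_1a_2}{d} - a_1 - a_2$. I would need to reconcile this value with the element named in the statement; the two expressions agree exactly when $d=1$, i.e. in the genuinely coprime Sylvester case, so the delicate bookkeeping for $d>1$ (including the degenerate subcase $b_1=1$ or $b_2=1$, where there is no gap at all) is precisely what the proof must handle with care.
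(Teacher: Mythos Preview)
The paper does not actually prove this lemma; it merely attributes the $n=2$ case to Sylvester and states the result without argument. So there is no paper proof to compare against, and I evaluate your proposal on its own merits.

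Your reduction $\FROB(a_1,a_2)=d\cdot\FROB(b_1,b_2)$ and your proof of Sylvester's theorem for coprime $b_1,b_2$ are correct, and they cleanly yield the displayed equation: since $a_1a_2-a_1-a_2\ge b_1b_2-b_1-b_2$ (your inequality check is right), every $dx$ with $x\ge a_1a_2-a_1-a_2+1$ does lie in $\FROB(a_1,a_2)$, and the finite set $F$ absorbs the rest.

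Your caution in the final paragraph is not a gap in your proof but a genuine error in the lemma as stated. For $d>1$ the claim $(a_1a_2-a_1-a_2)d\notin\FROB(a_1,a_2)$ is simply false. Take $a_1=6$, $a_2=10$, so $d=2$, $b_1=3$, $b_2=5$; then $(a_1a_2-a_1-a_2)d=88=2\cdot44$, and $44\in\FROB(3,5)$ since $44>7=3\cdot5-3-5$, hence $88\in\FROB(6,10)$. The correct largest non-representable multiple of $d$ is $d(b_1b_2-b_1-b_2)=\tfrac{a_1a_2}{d}-a_1-a_2$, exactly as you computed. The paper's formulation is only correct when $d=1$, which is consistent with the surrounding prose introducing the Frobenius problem for relatively prime generators; the extension to general $d$ in the lemma statement appears to be a slip. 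You should not try to ``handle'' this case---there is nothing to prove, because the assertion is wrong.
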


For the general case there is no neat formula; however, finding $\FROB(\vec a)$
is decidable. There has been much work on this problem. 
Beihoffer et al.~\cite{BHNW-2005} gives a fast algorithm and 
many prior references to other algorithms. We state the relevant lemma. 

\begin{lemma}\label{le:frob}
Let $a_1,\ldots,a_n \in\N$. Let $d=\gcd(a_1,\ldots,a_n)$. 
\begin{enumerate}
\item
There exists  finite $F$ and an $M\in\N$ such that 

$$\FROB(\vec a) = F \cup \{ dx \colon x\ge M\}$$

and 

$(M-1)d\notin \FROB(\vec a)$. 

\item 
There is an algorithm that will, given $a_1,\ldots,a_n$, find $F$ and $M$. 
\end{enumerate}
\end{lemma}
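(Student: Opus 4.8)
The plan is to reduce to the coprime case, establish cofiniteness of the resulting numerical semigroup by a residue-class argument, and then observe that every step is effective so that the same argument yields the algorithm.

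First I would reduce to the case $\gcd = 1$. Every element of $\FROB(\vec a)$ is a nonnegative integer combination of $a_1,\ldots,a_n$, hence a multiple of $d$, so $\FROB(\vec a)\subseteq d\N$. Writing $b_i = a_i/d$, we have $\gcd(b_1,\ldots,b_n)=1$ and $\FROB(\vec a) = \{dy : y\in \FROB(\vec b)\}$. It therefore suffices to prove that $\FROB(\vec b)$ is cofinite in $\N$ and to locate its largest gap; multiplying by $d$ then produces the set $F$ and the threshold $M$, with $(M-1)d$ equal to $d$ times the Frobenius number of $\vec b$.

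The heart of Part 1 is the cofiniteness of $\FROB(\vec b)$ when $\gcd(b_1,\ldots,b_n)=1$, and I would prove it one residue class modulo $b_1$ at a time. Consider $R = \{\sum_{i=2}^n b_i x_i \bmod b_1 : x_i\in\N\}\subseteq \Z/b_1\Z$. This $R$ is a submonoid of the additive group $\Z/b_1\Z$ (it contains $0$ and is closed under addition), and a nonempty submonoid of a finite group is automatically a subgroup; since it contains the residues of $b_2,\ldots,b_n$ and $\gcd(b_1,b_2,\ldots,b_n)=1$, it must be all of $\Z/b_1\Z$. Hence for each residue $r$ there is an $s_r\in\FROB(\vec b)$ with $s_r\equiv r\pmod{b_1}$. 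Setting $N=\max_r s_r$, any integer $m\ge N$ with $m\equiv r\pmod{b_1}$ can be written $m = s_r + k b_1$ for some $k\ge 0$, so $m\in\FROB(\vec b)$. This gives cofiniteness, and the largest non-representable integer then determines $M$.

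For Part 2 I would make the argument effective. The only step needing an a priori bound is the selection of the $s_r$, and these can be computed by a breadth-first search in the Cayley graph of $\Z/b_1\Z$ with generators $b_2,\ldots,b_n$; since that graph has diameter at most $b_1-1$, this both produces the $s_r$ and yields the computable bound $s_r\le (b_1-1)\max_i b_i$. With $N=\max_r s_r$ in hand, one tabulates membership in $\FROB(\vec b)$ for every integer in $[0,N]$ by a standard coin-change dynamic program; the largest non-member fixes $M$, and $F$ is read directly off the table and scaled by $d$. The main obstacle is not the cofiniteness, which is classical, but securing this provably sufficient stopping bound so that the search is guaranteed to terminate; the Cayley-graph diameter estimate is exactly what supplies it.
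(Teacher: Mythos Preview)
Your argument is correct. The reduction to $\gcd=1$ is immediate, and the residue-class argument you give is exactly the classical Ap\'ery-set proof: once every residue modulo $b_1$ is hit by some $s_r\in\FROB(\vec b)$, adding multiples of $b_1$ shows that everything from $\max_r s_r$ onward is representable. The observation that a nonempty subset of a finite abelian group closed under addition is already a subgroup is the right way to see that all residues are hit, and your effective bound $s_r\le (b_1-1)\max_i b_i$ via the Cayley-graph diameter is sound (the graph has $b_1$ vertices and is connected, so diameter at most $b_1-1$). One cosmetic point: you should assume, or reorder so that, $b_1\ge 1$; if the paper's $\N$ includes $0$ then some $a_i$ could be zero and you want to pick a positive modulus.

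As for comparison with the paper: there is nothing to compare. The paper does not prove this lemma at all; it simply states it as known, remarking beforehand that ``finding $\FROB(\vec a)$ is decidable'' and citing Beihoffer et al.\ for fast algorithms and further references. Your write-up therefore supplies what the paper leaves to the literature, and does so by the standard route.
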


And now for the main theorem of this section.

\begin{theorem}\label{th:h10Z}~
\begin{enumerate}

\item
For all $d$, $\HtenZ(d,1)=\D$ and $\HtenN(d,1)=\D$. 
There is an algorithm that finds all of the integer roots
(which may be the empty set). 

\item
For all $n$, $\HtenZ(1,n)=\D$. 

\item 
For all $n$, $\HtenN(1,n)=\D$. 

\item
$\HtenZ(2,2)=\D$. 

\item 
$\HtenN(2,2)=\D$. 

\item
For all $n$, $\HtenZ(2,n)=\D$.

\item
For all $n$, $\HtenN(2,n)=\D$. 

\end{enumerate}
\end{theorem}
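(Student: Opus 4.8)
The plan is to treat the six parts separately, matching each to the appropriate tool; parts~1--3 are essentially about the linear and univariate worlds, while parts~4--6 are the quadratic cases where the real difficulty lives. For part~1, a univariate $p\in\Z[x]$ of degree $d$ has at most $d$ roots, and by the rational root theorem every integer root divides the constant term $p(0)$ when $p(0)\neq 0$; so I would enumerate the finitely many divisors of $p(0)$, test each, and when $p(0)=0$ factor out $x$ and recurse (the zero polynomial being a trivial case). This lists all integer roots and so decides both $\HtenZ(d,1)$ and $\HtenN(d,1)$. For part~2, the equation $a_0+a_1x_1+\cdots+a_nx_n=0$ has a solution in $\Z$ iff $\gcd(a_1,\ldots,a_n)\mid a_0$, which the Euclidean algorithm settles. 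Part~3 is the first genuine step: after dropping variables with zero coefficient, if the nonzero coefficients all share a sign the equation asks exactly whether $-a_0\in\FROB(\vec a)$ (up to sign), decidable by Lemma~\ref{le:frob}; the remaining case is mixed signs.

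In the mixed case I would use that one positive coefficient $a$ and one negative coefficient $-b$ already represent over $\N$ \emph{every} multiple of $\gcd(a,b)$: solve $ax-by=m\gcd(a,b)$ in $\Z$ and shift by the period $(b,a)/\gcd(a,b)$ to force both variables nonnegative. Since this yields the full subgroup $\gcd(a,b)\Z$, the residues of the remaining coefficients generate the same subgroup of $\Z/\gcd(a,b)\Z$ whether as a monoid or as a group, and a short computation shows the representable set is exactly $g\Z$, where $g$ is the greatest common divisor of all nonzero coefficients. Thus the mixed case reduces once more to the divisibility test $g\mid a_0$, and part~3 is decidable.

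Parts~4 and~5 concern integer, respectively natural, points on a plane conic $ax^2+bxy+cy^2+dx+ey+f=0$, and I would organize them by the discriminant $\Delta=b^2-4ac$. If $\Delta<0$ the quadratic part is definite, the zero set is bounded, and I simply enumerate; if $\Delta=0$ or $\Delta$ is a perfect square the quadratic part degenerates or factors over $\Q$ into linear forms, reducing to the linear analysis already in hand. The substantive case is $\Delta>0$ non-square: completing the square and clearing denominators converts the equation into a Pell-type equation $X^2-\Delta Y^2=N$, whose solvability is decidable and whose solution set is a finite union of orbits under the fundamental unit. For part~4 I test whether any orbit meets $\Z^2$; for part~5 I additionally test $\N^2$, which requires tracking how the signs of $x$ and $y$ evolve as one walks along each orbit. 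Part~6 is the general single quadratic $Q(\vec x)=0$ in $n$ variables: I would homogenize $Q$ to a quadratic form, decide rational solvability by the Hasse--Minkowski local--global principle (only the finitely many primes dividing the discriminant, together with $\R$, need be tested, each effectively), and pass from $\Q$ to $\Z$ by reduction theory, which sorts integer representations into finitely many effectively described classes.

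I expect the main obstacles to lie in parts~4--6. For parts~4 and~5 the delicate point is the hyperbolic case: one must prove the infinitely many Pell solutions fall into finitely many \emph{computable} orbits and decide whether any meets the target region, with the positivity requirement of part~5 the hardest, since it demands a clear picture of how $x$ and $y$ change sign along an orbit generated by the fundamental solution. For part~6 the obstacle is effectivity throughout: bounding the primes and the class and automorph representatives one must examine, and recovering genuine affine solutions (those with homogenizing coordinate $\pm1$) from the merely homogeneous zeros that the local--global principle guarantees.
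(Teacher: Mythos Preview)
Your proposal is correct, and on parts~1, 2, and~6 it matches the paper's proof almost exactly (rational root theorem; the gcd criterion; and for part~6 an appeal to Siegel via Hasse--Minkowski and reduction theory, which the paper simply cites without the sketch you give).

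The genuine differences are in parts~3--5. For part~3, the paper separates the positive and negative coefficients onto opposite sides, writes the equation as $\sum a_i x_i = b + \sum b_j y_j$ with all $a_i,b_j\in\N$, applies Lemma~\ref{le:frob} to each side to obtain explicit descriptions $F_a\cup\{d_a x:x\ge M_a\}$ and $b+F_b\cup\{b+d_b x:x\ge M_b\}$, and then tests whether these two sets intersect. Your route is different: in the mixed-sign case you bypass Frobenius entirely by observing that one positive and one negative coefficient already yield the full subgroup $\gcd(a,b)\Z$, and then use the fact that a submonoid of a finite abelian group is a subgroup to conclude the representable set is exactly $g\Z$. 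Your argument is slicker for the mixed case and needs only the gcd test there; the paper's approach is more uniform (one mechanism for all sign patterns) at the cost of invoking the Frobenius algorithm twice even when it is not needed.

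For parts~4 and~5 you go well beyond the paper: the paper simply cites Gauss (and Lagarias for the $\NP$ bound) and says Gauss's description of all integer solutions is explicit enough to read off whether any lie in $\N$. Your discriminant trichotomy and reduction to the Pell equation $X^2-\Delta Y^2=N$ is the standard way to make that citation effective, and your worry about tracking sign changes along orbits for part~5 is exactly the right place to focus. So here your proposal is not just different but strictly more detailed than what the paper offers.
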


\begin{proof}~

\noindent
1) These are both  easy consequences of the rational root theorem: 
If $a_dx^d + \cdots a_1x + a_0\in\Z[x]$ has a rational root $\frac{p}{q}$ then $p$ divides $a_0$ 
and $q$ divides $a_n$.

The above algorithm does not find the roots. One can modify the algorithm so that it does
find the roots; however, that would be a slow algorithm. 
Cucker et al.~\cite{CKS-1999} gave a polynomial time algorithm for finding the
set of integer roots. 

\smallskip

\noindent
2) Given $\sum_{i=1}^n a_ix_i = b$ where $a_1,\ldots,a_n,b\in\Z$, we need to determine if there is 
a solution in $\Z$. 

First find $d=\gcd(a_1,\ldots,a_n)$. 
If $d$ does not divide $b$ then there are no solutions in $\Z$.
If $d$ does divide $b$ then there is a solution in $\Z$:
Let $x_1',\ldots,x_n'$ be such that $\sum_{i=1}^n a_ix_i'=d$
and let $x_i= \frac{bx_i'}{d}$. 

\smallskip

\noindent
3) We can phrase any problem we need to solve as follows:
Let $a_1,\ldots,a_n\in\N$ and $b_1,\ldots,b_m,b\in\N$. Is there a solution in $\N$ of 

$$\sum_{i=1}^n a_ix_i = b+ \sum_{i=1}^m b_iy_i\hbox{?}$$

Let $d_a=\gcd(a_1,\ldots,a_n)$ and $d_b=\gcd(b_1,\ldots,b_m)$. 

By Lemma~\ref{le:frob}:
\begin{itemize}
\item
There is an algorithm that will find finite set $F_a$ and an $M_a\in\N$ such that 

$$
\biggl 
\{ 
\sum_{i=1}^n a_ix_i \colon x_1,\ldots,x_n\in\N
\biggr
\}
= F_a \cup \{xd \colon x\ge M_a
\}
$$

\item 
There is an algorithm that will find finite set $F_b$ and an $M_b\in\N$ such that 

$$
\biggl 
\{ 
b+\sum_{i=1}^n b_ix_i \colon x_1,\ldots,x_n\in\N
\biggr \}= 
F_b \cup \{b+xd \colon x\ge M_b
\}
$$
\end{itemize}

Once we have $F_a,M_a,F_b,M_b$ it is easy to determine if 
$
\{ 
F_a \cup \{xd \colon x\ge M_a
\}
$
and 
$
\{ 
F_b \cup \{b+xd \colon x\ge M_b
\}
$
intersect. If so, then there is a solution to the original equation, and if not,
then there is not. 

\smallskip
4) Gauss~\cite{Gauss-1986} (27, Art, 216-221)  proved this.
For a more modern approach, 
Lagarias~\cite{Lagarias-1979} (Theorem 1.2.iii)  
showed that if $p(x,y)\in \Z[x,y]$ of degree 2 
has a solution then there is a short proof for this fact (short means
of length bounded by a polynomial in the size of the coefficients).
Formally he showed that the
following set is in $\NP$.

$$\{ (a,b,c,d,e,f) \in \Z^6 \colon (\exists x,y\in\Z)[ax^2 + bxy + cx^2 + dx + ey + f = 0] \}.$$

(There is a solver on the web here: 

\centerline{\url{https://www.alpertron.com.ar/QUAD.HTM} )}

\smallskip
5) Gauss's method to determine if  $f(x,y)\in \Z[x,y]$, of degree 2,
has a solution in $\Z$ finds all of the solutions in a nice form.
From this form one can determine if there are any solutions in $\N$. 

\smallskip
6) For all $n$, $\HtenZ(2,n)=\D$.
This is a sophisticated theorem due to Siegel~\cite{siegelhten}. 
See also a simpler (though still difficult) proof by
Grunewald and Segal~\cite{GS-1981}.

\smallskip
7) For all $n$, $\HtenN(2,n)=\D$.
This is a sophisticated theorem due to Grunewald and Segal~\cite{gsHten}.
Their proof uses the Hasse-Minkowski Theorem (see Page 32 of Grunewald-Segal).
\end{proof}

\subsection{The Curious Case of $\HtenZ(3,2)$}\label{se:32}

We give evidence that $\HtenZ(3,2)=\D$; however, this is still open.

\begin{definition}
An element of $\Q[x_1,\ldots,x_n]$ is {\it absolutely irreducible} 
if it is irreducible over $\C$. For example, 

$x^2+y^2-1$ is absolutely irreducible, but 

$x^2+y^2=(x+iy)(x-iy)$ is not. 
\end{definition}

A combination of results by Baker and Cohen~\cite{BC-1970}, 
Poulakis~\cite{Poul-1993}, and
Poulakis~\cite{Poul-2002} imply the following theorem:

\begin{theorem}
There is an algorithm which, given any absolutely irreducible polynomial
$P(x,y)\in\Z[x,y]$ of degree 3, determines all integer solutions of the
equation $P(x,y)=0$.
(See 
Poulakis~\cite{Poul-2002} for a more precise definition of 
``determines all integer solutions'' in the case that there are an infinite 
number of them.)
\end{theorem}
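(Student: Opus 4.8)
The plan is to split on the geometric genus of the projective curve $\bar C$ defined by the degree-$3$ homogenization $F(X,Y,Z)$ of $P$. Since $P$ is absolutely irreducible, $\bar C$ is an irreducible plane cubic, whose arithmetic genus equals $(3-1)(3-2)/2 = 1$; hence its geometric genus $g$ is either $0$ or $1$, with $g=1$ precisely when $\bar C$ is smooth and $g=0$ precisely when $\bar C$ has a singular point, which (the genus dropping by only $1$) is then a single node or cusp. So the first step is to decide which case holds, and this is effective: by Euler's identity the singular points of $\bar C$ are exactly the common zeros in $\mathbb{P}^2(\C)$ of $\partial F/\partial X$, $\partial F/\partial Y$, $\partial F/\partial Z$, and the existence of such a common zero is decidable (via resultants or a Gr\"obner basis computation).

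In the genus $1$ case the affine curve $C$ is an affine model of a curve of genus $1$, so by Siegel's theorem it has only finitely many integer points; the task is to find them. Here I would invoke the effective theorem of Baker and Cohen~\cite{BC-1970}: using Baker's theory of linear forms in logarithms they produce a bound $B$, explicitly computable from the coefficients of $P$, such that every $(x,y)\in\Z^2$ with $P(x,y)=0$ satisfies $\max(|x|,|y|)\le B$. With $B$ in hand the algorithm tests every lattice point of the box $[-B,B]^2$ and returns those satisfying the equation, giving the complete finite solution set.

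In the genus $0$ case $C$ is rational. Projecting from the singular point yields a birational map $\bar C \to \mathbb{P}^1$ and hence a rational parametrization $x=\phi(t)$, $y=\psi(t)$ with $\phi,\psi\in\Q(t)$, under which the integer points of $C$ correspond to the parameter values producing integer coordinates. The solution set may now be infinite (as for $y=x^3$), so the output must be a finite \emph{description} of it. Whether it is finite is governed by the behaviour at infinity: a genus $0$ affine curve has infinitely many integer points only when its normalization has at most two points over the locus at infinity. I would carry this out algorithmically using the effective results of Poulakis~\cite{Poul-1993,Poul-2002}, which determine exactly when the integer points are infinite and, in that case, output finitely many polynomial parametrizations (together with finitely many sporadic points) exhausting them, and otherwise bound the finitely many solutions effectively. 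Combining the two cases yields the algorithm.

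The main obstacle throughout is \emph{effectivity}, not finiteness. Siegel's theorem already gives finiteness in the genus $1$ case and in the sporadic part of the genus $0$ case, but it is famously ineffective, so the genus $1$ case rests entirely on Baker's method, which returns a correct but astronomically large $B$; the delicate work inside~\cite{BC-1970} is reducing the cubic to a Weierstrass-type model and expressing its integer points through linear forms in logarithms whose size can be bounded from below. In the genus $0$ case the subtle point is to solve the integrality condition $\phi(t),\psi(t)\in\Z$ completely — deciding when a parametrized family of values is integral — and to separate the infinite polynomial families from the finitely many exceptional points; this separation and its effectivity are exactly what Poulakis's analysis supplies.
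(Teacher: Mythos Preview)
Your proposal is correct and matches the paper's approach: the paper does not give its own proof but simply states that the theorem follows from combining Baker--Coates~\cite{BC-1970} (genus~$1$) with Poulakis~\cite{Poul-1993,Poul-2002} (genus~$0$), and you have spelled out exactly that combination --- the genus dichotomy for an absolutely irreducible plane cubic, Baker's effective bound in the smooth case, and Poulakis's effective treatment of the rational case.
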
 

The original algorithm (from Baker and Coates) is not practical; however, 
Peth\H{o} et al.~\cite{PZGH-1999} and Stroker-Tzankis~\cite{ST-2003} 
have practical algorithms. There is also an algorithm for solving a large class
of cubic equations implemented in SageMath. 

So why isn't $\HtenZ(3,2)=\D$? Because the case where $P(x,y)$ has degree 3 but 
is not absolutely irreducible is still open.

\section{Particular Equations}\label{se:part} 

\subsection{If the Variables Are Separated$\ldots$}\label{se:sep} 

Ibarra and Dang~\cite{ID-2006} proved the following. 

\begin{definition}
$P(z_1,\ldots,z_n)$ is a {\it Presburger Relation} if it can be expressed with 
$\Z$, $=,+,<$, and the usual logical symbols. For example 

$(z_1 + z_2 < z_3 + 12) \wedge (z_1 + z_4 = 17)$ is a Presburger formula, but

$z_1z_2=13$ is not. 
\end{definition}

\begin{theorem}
The following is decidable:

\noindent
{\bf Instance} 

\noindent
(1) For $1\le i\le k$, polynomial $p_i(y)\in \Z[y]$, and linear functions 
$F_i(\vec x),G_i(\vec x)\in \Z[x_1,\ldots,x_n]$, and 
(2) a Presburger relation $R(z_1,\ldots,z_k)$.

\noindent
{\bf Question} Does there exist $y,\vec x$ such that 

$$R(p_1(y)F_1(\vec x)+G_1(\vec x),\ldots,p_k(y)F_k(\vec x)+G_k(\vec x)\,)$$

\noindent 
holds?
\end{theorem}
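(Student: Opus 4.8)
\begin{sketch}
The plan is to reduce the question to deciding, over the single integer parameter $y$, the nonemptiness of a parametric family of Presburger sets, and then to exploit that $y$ is the \emph{only} nonlinear variable. First I would put the relation $R$ into its standard normal form: a Boolean combination of linear inequalities $\sum_i \alpha_i z_i \le \beta$ and congruences $\sum_i \alpha_i z_i \equiv \beta \pmod{m}$. Substituting $z_i = p_i(y)F_i(\vec x)+G_i(\vec x)$ and using that each $F_i,G_i$ is linear in $\vec x$, every $z_i$ becomes affine in $\vec x$ with coefficients lying in $\Z[y]$. Hence the whole question becomes $\exists y\,\exists \vec x\,\Phi(y,\vec x)$, where $\Phi$ is a Boolean combination of atoms that are linear in $\vec x$ with coefficients in $\Z[y]$ for the inequalities and with $\Z[y]$ coefficients modulo fixed integers for the congruences. (Searching over $\N$ rather than $\Z$ is handled by adjoining the Presburger constraints $x_j\ge 0$.)

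Second, I would record the crucial periodicity: for any $P\in\Z[y]$ and any modulus $m$ we have $P(y+m)\equiv P(y)\pmod m$, so the truth of every congruence atom depends on $y$ only through $y \bmod M$, where $M$ is the least common multiple of the finitely many moduli that occur. Splitting into the $M$ residue classes $y\equiv a\pmod M$ and writing $y=a+Mt$ turns the congruence atoms into fixed-coefficient congruences, while the inequality atoms retain coefficients polynomial in $t$. For each fixed residue $a$ this is precisely a \emph{parametric Presburger family} $S_t=\{\vec x:\Phi_a(t,\vec x)\}$ in the single parameter $t$, in the sense studied by Woods and by Bogart, Goodrick, and Woods.

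The heart of the argument, and the step I expect to be the main obstacle, is to control the satisfiability set $T_a=\{t: S_t\neq\emptyset\}$. Here I would invoke the theorem that, for a parametric Presburger family with polynomial coefficient data in one parameter, the set of parameter values for which the family is nonempty is \emph{eventually periodic}, with computable threshold and period. The intuition is that integer feasibility of a linear system is governed by sign conditions on finitely many combinatorial combinations of its coefficients; since all coefficients are polynomials in $t$, these signs stabilize for large $t$, and together with the now-periodic congruences this forces an eventually periodic satisfiability pattern. Making this rigorous over $\Z$ rather than $\Q$ is the delicate point: eliminating the linear variables $\vec x$ introduces \emph{parametric} moduli whose growth in $t$ must be tamed, which is exactly what the parametric Presburger machinery accomplishes. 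A self-contained alternative would be to reprove this eventual periodicity directly for our special shape, but that is where the real work lies.

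Finally, I would assemble the decision procedure. On each of the finitely many residue classes, and separately for $y\ge 0$ and, via $y\mapsto -y$, for $y<0$, compute the threshold $N_0$ and period $\pi$ of $T_a$. Because an eventually periodic set is nonempty iff it meets the finite window $[0,N_0+\pi)$, and because for each concrete integer $y$ the statement $\exists\vec x\,\Phi(y,\vec x)$ is an ordinary Presburger sentence, decidable by Presburger's theorem, the original question reduces to finitely many Presburger decisions. Taking the disjunction over all residue classes and both signs of $y$ decides the problem.
\end{sketch}
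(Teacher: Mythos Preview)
The paper does not actually prove this theorem; it merely states the result and attributes it to Ibarra and Dang~\cite{ID-2006}. So there is no ``paper's own proof'' to compare against.

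That said, your sketch is a plausible route to decidability. The reduction to a one-parameter family of Presburger problems is the right structural observation, and your handling of the congruence atoms via periodicity in $y$ is clean. The real weight of your argument sits on the parametric Presburger step (Bogart--Goodrick--Woods), which does cover one parameter with polynomial coefficient data and does yield eventual periodicity of the nonemptiness set; so the outline closes. Two caveats are worth flagging. First, you are invoking a 2017--2019 theorem to prove a 2006 result, so this cannot be how Ibarra and Dang argued; their paper works via more elementary, automata-flavored reasoning tied to semilinear sets, and a referee might ask you either to cite the original or to make your argument self-contained. Second, the passage ``integer feasibility \ldots is governed by sign conditions on finitely many combinatorial combinations of its coefficients'' is the intuition, not the proof: over $\Z$ (as opposed to $\Q$) eliminating the $\vec x$ introduces divisibility constraints whose moduli themselves depend on $t$, and that is exactly where a naive argument breaks and the parametric Presburger machinery is genuinely needed. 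You acknowledge this, but if you ever want to avoid the black box, that is the place where the honest work lives.
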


\subsection{The Curious Case of $x^3+y^3+z^3=k$}\label{se:xyzk}

Rather than looking at $\HtenZ(d,n)$ let's focus on one equation
that has gotten a lot of attention: 

$$x^3+y^3 + z^3 = k.$$

It is easy to show that, for $k\equiv 4,5 \pmod 9$, there is no solution in $\Z$. 
What about for $k\not\equiv 4,5 \pmod 9$? 
\begin{enumerate}
\item
Heath-Brown~\cite{HB-1992} conjectured that there are an infinite number of
$k\not\equiv 4,5 \pmod 9$ for which there is a solution in $\Z$. 
Others think that, for all 
$k\not\equiv 4,5 \pmod 9$, $x^3+y^3+z^k=k$  has a solution in $\Z$. 
\item 
Elkies~\cite{Elkies-2000} devised an efficient algorithm to find solutions to

$x^3+y^3+z^3=k$ if there is a bound on $x,y,z$. 
\item
Elsehans and Jahnel~\cite{EJ-2009} modified and implemented Elkies algorithm and 
determined the following: 
The only $k\le 1000$, $k\not\equiv 4,5 \mod 9$, where
they did not find a solution were

33, 42, 74, 114, 165, 390, 579, 627, 633, 732, 795, 906, 921, and 975.

Their work, and the work of all the items below, required hard mathematics,
clever computer science, and massive computer time. 
\item 
Huisman~\cite{Huisman-2016} found a solution for $k=74$. For many other values of $k$ where there
were solutions, Huisman found additional solutions. 
\item 
Booker~\cite{Booker-2019} found a solution for $k=33$. 
\item 
Booker found solutions for $k=42$ and $k=795$. These have not been formally published yet; 
however, the $x,y,z$ can be found on the Wikipedia site:

\url{https://en.wikipedia.org/wiki/Sums_of_three_cubes}

\item
As of April 2021 (when this article was written) 
the only $k\le 1000$, $k\not\equiv 4,5 \mod 9$,  where
no solution is known are: 

114, 165, 390, 579, 627, 633, 732, 906, 921, and 975.
\end{enumerate} 

Consider the function that, on input $k$, determines if $x^3+y^3+z^3=k$ has
a solution in $\Z$. Is this function computable? 
\begin{enumerate}
\item 
I suspect the function is computable.
Why? What would a proof that this function is not computable look like? It would have
to code a Turing machine computation into a very restricted equation. 
This seems unlikely to me. 
Note also that it may be the case the equation has a solution for every
$k\not\equiv 4,5 \pmod 9$, in which case the decision problem is not just decidable---it's regular!
\item 
Daniel Varga has suggested there may be a proof that does not go through
Turing machines. Perhaps some other undecidable problem? Also, there may be
new techniques we just have not thought of yet. 
\end{enumerate}

\section{Discussion}\label{se:disc}

If I was to draw the grid for $\HtenN$ or $\HtenZ$ mentioned in the introduction
there would be a large space of problems that are open. 
We give an example of a part of that space.

Recall that 
$\HtenZ(d,1)=\D$,  $(\forall n)[\HtenZ(2,n)=\D]$, and $\HtenZ(8,174)=\U$. 
The following are unknown:
\begin{enumerate}
\item 
$\HtenZ(3,2), \HtenZ(3,3), \HtenZ(3,4),\ldots$ .
\item 
$\HtenZ(4,2), \HtenZ(4,3), \HtenZ(4,4),\ldots$ .
\item 
$\HtenZ(5,2), \HtenZ(5,3), \HtenZ(5,4),\ldots$ .
\item 
$\HtenZ(6,2), \HtenZ(6,3), \HtenZ(6,4),\ldots$ .
\item 
$\HtenZ(7,2), \HtenZ(7,3), \HtenZ(8,4),\ldots$ .
\item 
$\HtenZ(8,2), \HtenZ(8,3), \HtenZ(8,4),\ldots, \HtenZ(8,173)$. 
\end{enumerate}

The situation is worse than it looks. From the discussion in Section~\ref{se:xyzk}
we know that the status of the following function is unknown: Given $k$, determine if 
$x^3+y^3+z^3=k$ has a solution in $\Z$.

What is the smallest $n$ such that, for some $d$, $\HtenZ(d,n)=\U$? We present an
informed opinion by paraphrasing and combining two passages from Sun~\cite[pages 209 and 211]{zhi}: 

\begin{enumerate}
\item
Matijasevi\v{c} and Robinson~\cite{MR-1975} showed there is a $d$ such that $\HtenN(d,13)=\U$. 
\item 
Matijasevi\v{c} showed there is a $d$ such that $\HtenN(d,9)=\U$. 
By Theorem~\ref{th:HtenZD} we have that there is a $d'$ with $\HtenZ(d',20)=\U$. 
\item 
Baker~\cite{Baker-1968} showed the following is decidable: Given $p\in \Z[x,y]$, $p$ homogenous,
does it have a solution in $\Z$?
This does \emph{not} show that
 $$(\forall d)[\HtenZ(d,2)=\D]$$ 
\noindent
but it points in that direction. 
\item 
(Direct quote from page 209.) {\it In fact, A.~Baker, 
Matijasevi\v{c} 
and Robinson even conjectured that $\exists^3$ is undecidable over $\N$.}
In our notation, there exists $d$ such that $\HtenN(d,3)=\U$.
\end{enumerate}

Suggestions:
\begin{enumerate}
\item
Since a grid for $\HtenN(d,n)$ or $\HtenZ(d,n)$ is somewhat cumbersome there should be
a website of results. 
\item 
That website should also include classes of equations such as $x^3+y^3+z^3=k$
and what is known about them. 
\item 
Work on showing $\HtenN(d,n)=\U$ or $\HtenZ(d,n)=\U$ seems to have stalled. Perhaps
the problems left are too hard. Perhaps the problems left could be resolved but it
would be very messy. Perhaps computer-work could help (see next point). 
Perhaps deeper number theory is needed (current results seem to use clever but somewhat elementary 
number theory). 
Perhaps the problems left are decidable. 
In any case, there should be an effort in this direction. 
\item
There has been some work on getting Universal Turing machines down to a
very small number of states and alphabet size.  See, for example, the
work of 
Aaronson~\cite{Aaronson-2020}, 
Michel~\cite{Michel-2013}, 
Yedidia and Aaronson~\cite{YA-2016}, 
See also the following blog post on this site: \url{https://vzn1.wordpress.com}
that you get by clicking on MENU and looking for 
\emph{Undecidability: The Ultimate Challenge}. 

There has even been some computer work done in writing compilers for
these machines. It is plausible that by starting from these rather
small machines, smaller polynomials may suffice to simulate them. 
\end{enumerate}

\section{Variants that Use Fewer Variables}\label{se:variants}

Hilbert's 10th problem, and the restrictions on it in this article,
are about the solvability of the following problem: Given
$p(x_1,\ldots,x_n)\in\Z[x_1,\ldots,x_n]$, is the following true over $\Z$: 

$$(\exists x_1)\cdots(\exists x_n)[p(x_1,\ldots,x_n)=0].$$

(Undecidability results were usually about truth over $\N$.) 

There has been much work in getting the number of variables needed
for an undecidability result to be small. 
As we saw in Theorem~\ref{th:h10N}, $\HtenN(1.6\times 10^{45},9)=\U$. 
As of April 2021 (when this was written) 9 is the lowest $n$ such that
there is known to be a $d$ with $\HtenN(d,n)=\U$. 
The result of 9 was proven by Matijasevi\v{c} in the early 1980's
(it appears in Jones~\cite{Jones-1982} and credited to 
Matijasevi\v{c}
). Hence the 9 has not been improved
in 29 years. I doubt it will be improved between writing this paper
and the appearance of this paper.
As we saw in Theorem~\ref{th:h10N}, there is a $d$ such that $\HtenZ(d,11)=\U$. 
This was proven in 2020 so it is plausible to be improved in the near future. 

We explore some variants of H10 where the number of variables needed
is smaller than 9 (for $\N$) and 11 (for $\Z$). 

\subsection{Different Quantifier Prefixes} 
Let $Q_1\cdots Q_n$ be a string of quantifiers. 
Consider the following problem. Given $p(x_1,\ldots,x_n)$ is 

$$(Q_1 x_1)\cdots(Q_n x_n)[p(x_1,\ldots,x_n)=0]$$

\noindent
true over $\Z$? Over $\N$?

\begin{notation}~
\begin{enumerate}
\item 
Let $Q_1\cdots Q_n$ be a string of quantifiers. 
{\it $Q_1\cdots Q_n$ is undecidable over $\N$} if
the above problem is undecidable over $\N$.
Similar for $\Z$. 
\item
A quantifier is \emph{bound} if there is an explicitly upper and lower bound on it
which is a polynomial in the prior variables.
\end{enumerate} 
\end{notation}

Recall from Theorem~\ref{th:h10N} that $\exists^9$ is undecidable over $\N$
and that this is the best known. 

Matijasevi\v{c}~\cite{YuriM-1972} showed that $\exists \forall \exists^2$,
with $\forall$ bounded, over $\N$, is undecidable. 
From this result one can obtain 
undecidability with polynomials of four variables. 
This is much better than nine. 
See Sun~\cite{Sun-2021} for more of history, references, and results about 
quantifier prefixes and undecidability over $\N$. 

Recall from Theorem~\ref{th:h10N} that $\exists^{11}$ is undecidable over $\Z$
and that this is the best known. 

Sun~\cite{Sun-2021} proved the following.
\begin{enumerate}
\item 
These are undecidable over $\Z$:
$\forall \exists^7$, 
$\forall^2 \exists^4$,
$\exists\forall\exists^4$,
$\exists\forall^2\exists^3$,
$\exists^2\forall\exists^3$, 
$\forall\exists\forall\exists^3$,
$\forall\exists^2\forall^2\exists^2$,
$\forall^2\exists\forall^2\exists^2$,
$\forall\exists\forall^3\exists^2$, 
$\exists^2\forall^3\exists^2$,
$\exists\forall\exists\forall^2\exists^2$,
$\exists\forall^6\exists^2$. 
Note that the shortest prefixes only use 6 variables which is much better than 11. 
\item 
These are undecidable if the $\forall$ are bounded:
$\exists\forall\exists^3$,
$\exists\forall^2\exists^3$,
$\exists^2\forall^2\exists^3$,
$\exists^2\forall^2\exists^2$,
$\exists^2\forall\exists\forall\exists^2$,
$\exists\forall^5\exists^2$. 
Note that the shortest prefixes only use 5 variables which is much better than 11. 
\end{enumerate}

\subsection{Sets of Polynomials} 

Matijasevi\v{c}
and Robinson~\cite{MR-1996} 
(see also 
Matijasevi\v{c}~\cite{YuriM-1972})
prove the following
(All quantifiers are over $\N$). 
Let $A$ be an r.e.\ set. 
\begin{enumerate}
\item 
There exist $3n$ polynomials 

$\{P_i(x_1,x_2,x_3)\}_{i=1}^n$,
$\{Q_i(x_1,x_2,x_3)\}_{i=1}^n$,
$\{R_i(x_1,x_2,x_3)\}_{i=1}^n$ such that 

$$a\in A \hbox{ iff }$$ 

$$(\exists b,c) \bigwedge_{i=1}^n (\exists d)[ P_i(a,b,c) < Q_i(a,b,c)\times d < R_i(a,b,c)].$$

From this result one can obtain a problem with polynomials in 3 variables
that is undecidable.

\item 
There exist polynomials 

$P(x_1,x_2,x_3)$ and 
$Q(x_1,x_2,x_3,x_4)$ such that 

$$a\in A \hbox{ iff }$$

$$(\exists b,c)(\forall f)[(f\le P(a,b,c)) \implies (Q(a,b,c,f)>0)]$$

From this result one can obtain a problem with polynomials in 3 variables
that is undecidable.

\end{enumerate} 

\section{What Would Hilbert Do?}\label{se:disc2}

\begin{definition}
$\HtenQ(d,n)$ is the problem where  the degree is $\le d$, the number of
variables is $\le n$, and we seek a solution in $\Q$.
\end{definition} 

Matijasevi\v{c}~\cite{Mat-uuuu} (Page 18) gives good reasons why Hilbert might
have actually wanted to solve $\HtenQ$. Hilbert stated 
the tenth problem as $\HtenZ$; however, if $\HtenZ$ is solvable then
$\HtenQ$ is solvable. He might have thought that the best way to solve
$\HtenQ$ is to solve $\HtenZ$. 

What is the status of $\HtenQ$ now? It is an open question to determine if
$\HtenQ$ is decidable. Hence the problem  Hilbert plausibly intended to ask is
still open and may yet lead to number theory of interest, which was his intent. 

\section{Acknowledgement}

We thank 
Blogger vzn, 
Timothy Chow, 
Thomas Erlebach,
Stephen Fenner, 
Lance Fortnow, 
Brogdan Grechuk, 
Nathan Hayes,
James Jones, 
Emily Kaplitz, 
Chris Lastowski,
David Marcus, 
Yuri Matijasevi\v{c},
Andras Salamon, 
Dan Segal, 
Yuang Shen,
Joshua Twitty,
Larry Washington,
Daniel Varga, 
Zan Xu,
for helpful discussions. 

We are particularly grateful to the following people. 
\begin{enumerate}
\item 
Timothy Chow for his comments in Section~\ref{se:h10} and help with the
discussion in Section~\ref{se:xyzk} of $x^3+y^3+z^3=k$. 
\item 
Brogdan Grechuk for telling us about the material that is now in Section~\ref{se:32}.
\item
James Jones for discussion of Theorem~\ref{th:h10N}.
\item
Yuri Matijasevi\v{c}
for pointing us to many results of which we were unaware. 
\end{enumerate}

\newcommand{\etalchar}[1]{$^{#1}$}


\begin{thebibliography}{BHNW05}

\bibitem[Aar20]{Aaronson-2020}
Scott Aaronson.
\newblock The busy beaver frontier (an open problems column).
\newblock {\em SIGACT News}, 53(3):31--55, 2020.
\newblock \url{https://www.cs.umd.edu/users/gasarch/open/busybeaver.pdf}.

\bibitem[Bak68]{Baker-1968}
Alan Baker.
\newblock On the representation of integers by binary forms.
\newblock {\em Philosophical Transactions of the Royal Society of London},
  263:173--191, 1968.

\bibitem[BC70]{BC-1970}
Alan Baker and John Coates.
\newblock Integer points on curves of genus 1.
\newblock {\em Mathematical Proceedings of the Cambridge Philosophical
  Society}, 67:595--602, 1970.

\bibitem[BDD{\etalchar{+}}18]{H10grid}
Jonas Bayer, Marco Davi, Simon Dubischar, Malte Habler, Abnik Pal, and
  Michael~Schmalian und Benedkit Stock Zusammengestellt~von
  Stephanie~Schiemann.
\newblock Beeindruckende mathematik-erfolge im, 2018.
\newblock
  \newline\url{https://www.degruyter.com/view/journals/dmvm/26/2-3/article-p55.xml?language=de}.

\bibitem[BHNW05]{BHNW-2005}
Dale Beihoffer, Jemimah Hendry, Albert Nijenhuis, and Stan Wagon.
\newblock {Faster algorithms for {F}robenius Numbers}.
\newblock {\em {Electronic Journal of Combinatorics}}, 12(R27), 2005.
\newblock
  \newline\url{https://www.combinatorics.org/ojs/index.php/eljc/article/view/v12i1r27
  }.

\bibitem[Boo19]{Booker-2019}
Andrew Booker.
\newblock Cracking the problem with 33, 2019.
\newblock \newline\url{https://arxiv.org/abs/1903.04284}.

\bibitem[CKS99]{CKS-1999}
Felipe Cucker, Pascal Koiran, and Steve Smale.
\newblock A polynomial time algorithm for diophantine equations in one
  variable.
\newblock {\em Journal of Symbolic Computation}, 27:21--29, 1999.

\bibitem[Dav73]{Davis-1973}
Martin Davis.
\newblock Hilbert's tenth problem is unsolvable.
\newblock {\em American Mathematical Monthly}, pages 233--2695, 1973.
\newblock
  \newline\url{https://www.math.umd.edu/~laskow/Pubs/713/Diophantine.pdf}.

\bibitem[DPR61]{DavisPutnam}
Martin Davis, Hillary Putnam, and Julia Robinson.
\newblock The decision problem for exponential diophantine equations.
\newblock {\em Annals of Mathematics}, 74:425--436, 1961.

\bibitem[EJ09]{EJ-2009}
Andreas-Stephan Elsehans and Jorg Jahnel.
\newblock New sums of three cubes.
\newblock {\em Mathematics of Computation}, 78(266), 2009.
\newblock
  \newline\url{https://www.ams.org/journals/mcom/2009-78-266/S0025-5718-08-02168-6/S0025-5718-08-02168-6.pdf}.

\bibitem[Elk00]{Elkies-2000}
Noam Elkies.
\newblock Rational points near curves and small nonzero $|x^3-y^3|$ via lattice
  reductions.
\newblock In {\em Algorithmic Number Theory}, volume 1838 of {\em Lecture Notes
  in Computer Science}, pages 33--63, 2000.
\newblock \newline\url{https://arxiv.org/abs/math/0005139}.

\bibitem[Gau86]{Gauss-1986}
Carl~F. Gauss.
\newblock {\em Disquisitiones Arithmetica (English Translation)}.
\newblock Springer-Verlag, New York, Heidelberg, Berlin, 1986.

\bibitem[GS81]{GS-1981}
Fritz~J. Grunewald and Dan Segal.
\newblock How to solve a quadratic equations in integers.
\newblock {\em Mathematical Proceedings of the Cambridge Philosophical
  Society}, 89:1--5, 1981.

\bibitem[GS04]{gsHten}
Fritz~J. Grunewald and Dan Segal.
\newblock On the integer solutions of quadratic equations.
\newblock {\em J. Reine Angew. Math.}, 569:13--45, 2004.

\bibitem[HB92]{HB-1992}
Roger Heath-Brown.
\newblock The density of zeros of forms for which weak approximation fails.
\newblock {\em Mathematics of Computation}, 59(200):612--623, 1992.

\bibitem[Hui16]{Huisman-2016}
Sander Huisman.
\newblock Newer sums of three cubes, 2016.
\newblock \newline\url{https://arxiv.org/abs/1604.07746}.

\bibitem[ID06]{ID-2006}
Oscar Ibarra and Zhe Dang.
\newblock On the solvability of a class of diophantine equations and
  applications.
\newblock {\em Theoretical Computer Science}, 352:342--346, 2006.

\bibitem[Jon80]{Jones-1980}
James Jones.
\newblock Undecidable diophantine equations.
\newblock {\em Bulletin of the American Mathematical Society}, 3(2):859--862,
  1980.
\newblock
  \newline\url{https://www.ams.org/journals/bull/1980-03-02/S0273-0979-1980-14832-6/S0273-0979-1980-14832-6.pdf}.

\bibitem[Jon82]{Jones-1982}
James Jones.
\newblock Universal diophantine equations.
\newblock {\em Journal of Symbolic Logic}, 47(3):549--571, 1982.
\newblock
  \newline\url{http://www.jstor.org/stable/2273588?seq=1#metadata_info_tab_contents}.

\bibitem[Lag79]{Lagarias-1979}
Jeffrey Lagarias.
\newblock Succinct certificates for the solvability of binary quadratic
  diophantine equations.
\newblock In {\em 20th Annual Symposium on Foundations of Computer Science
  (FOCS)}, pages 47--54. {IEEE}, 1979.
\newblock \newline\url{https://arxiv.org/pdf/math/0611209.pdf}.

\bibitem[Mat]{Mat-uuuu}
Yuri Matijasevi\v{c}.
\newblock Hilbert's tenth problem: {W}hat can we do with diophantine equations?
\newblock
  \newline\url{https://logic.pdmi.ras.ru/~yumat/personaljournal/H10history/H10histe.pdf}.

\bibitem[Mat70]{HilbertTenth}
Yuri Matijasevi\v{c}.
\newblock Enumerable sets are diophantine ({R}ussian).
\newblock {\em Doklady Academy Nauk, SSSR}, 191:279--282, 1970.
\newblock Translation in Soviet Math Doklady, Vol 11, 1970.

\bibitem[Mat72]{YuriM-1972}
Yuri Matijasevi\v{c}.
\newblock Arithmetical representations of enumerable sets with a small number
  of quantifiers.
\newblock {\em Zapiski Nauchnykh Seminarov Leningradskogo Otdeleniya
  Matematicheskogo}, 32, 1972.
\newblock Google the name and you can probably find a translation into English.

\bibitem[Mat93]{HilbertTenthbook}
Yuri Matijasevi\v{c}.
\newblock {\em Hilbert's Tenth Problem}.
\newblock MIT Press, Cambridge, 1993.

\bibitem[Mic13]{Michel-2013}
Pascal Michel.
\newblock Problems in number theory from busy beaver competitions, 2013.
\newblock \url{https://arxiv.org/pdf/1311.1029v1.pdf}.

\bibitem[MR75]{MR-1975}
Yuri Matijasevi\v{c} and Julia Robinson.
\newblock Reduction of an arbitrary diophantine equation to one in 13 unknowns.
\newblock {\em Acta Arithmetica}, pages 521--553, 1975.

\bibitem[MR96]{MR-1996}
Yuri Matijasevi\v{c} and Julia Robinson.
\newblock Two universal 3-quantifier representations of recursively enumerable
  sets.
\newblock In {\em The collected works of Julia Robinson}, 1996.
\newblock Earlier version appeared in Russian in a collection of papers
  dedicated to A.~A.~Markov, in 1974.

\bibitem[Poo17]{Poonen-2017}
Bjorn Poonen.
\newblock {\em Rational points on varieties}, volume 186 of {\em Graduate
  studies in mathematics}.
\newblock American Mathematical Society, 2017.

\bibitem[Pou93]{Poul-1993}
Dimitrios Poulakis.
\newblock Points entiers sur les courbes de genre~0.
\newblock {\em Colloquim Mathematicae}, 66:1--7, 1993.

\bibitem[Pou02]{Poul-2002}
Dimitrios Poulakis.
\newblock Solving genus zero diophantine equations with at most two infinite
  valuations.
\newblock {\em Journal of Symbolic Computation}, 33:479--491, 2002.

\bibitem[PZGH70]{PZGH-1999}
Attila Petho, Horst~G Zimer, Josef Gebel, and Emanuel Herrmann.
\newblock Computing all s-integral points on elliptic curves.
\newblock {\em Mathematical Proceedings of the Cambridge Philosophical
  Society}, 127:383--402, 1970.

\bibitem[Sie72]{siegelhten}
Carl~Ludwig Siegel.
\newblock Zur theorie der quadratischen formen.
\newblock {\em Nachrichten der Akademie der Wissenschaften in G\"ottingen,
  Mathematisch-Physikalische Klasse}, 3:21--46, 1972.

\bibitem[ST03]{ST-2003}
Roel Stoeker and Nikoluas Tzanakis.
\newblock Computing all integer solutions of a genus~1 equation.
\newblock {\em Mathematics of Computation}, 72(1917--1933), 2003.

\bibitem[Sun92]{zhi}
Zhi-Wei Sun.
\newblock A new relation-combining theorem and its application.
\newblock {\em ZL}, 38:209--212, 1992.
\newblock \newline\url{http://maths.nju.edu.cn/~zwsun/14z.pdf}.

\bibitem[Sun20]{Sun-2020}
Zhi-Wei Sun.
\newblock Further results on {H}ilbert's tenth problem.
\newblock {\em Science China Mathematics}, This Journal Does not have
  Volumes:1--26, 2020.

\bibitem[Sun21]{Sun-2021}
Zhi-Wei Sun.
\newblock Mixed quantifier prefixes over diophantine equations with integer
  variables, 2021.
\newblock \newline\url{https://arxiv.org/pdf/2103.08302.pdf}.

\bibitem[YA16]{YA-2016}
Adam Yedidia and Scott Aaronson.
\newblock A relatively small {T}uring machine whose behaviour is independent of
  set theory, 2016.
\newblock \url{https://arxiv.org/abs/1605.04343}.

\end{thebibliography}
\end{document}